 \newtheorem{theorem}{Theorem}[section]
 \newtheorem*{thghjkl: eorem*}{Theorem}
 \newtheorem{proposition}[theorem]{Proposition}
 \newtheorem{lemma}[theorem]{Lemma}
\theoremstyle{definition}
 \newtheorem{definition}[theorem]{Definition}
\theoremstyle{remark}
 \newtheorem{remark}[theorem]{Remark}
 \newtheorem{example}[theorem]{Example}
\numberwithin{equation}{section}
\newcommand{\qtbinom}[2]{\genfrac{[}{]}{0pt}{}{#1}{#2}}
\title[Erd\H{o}s--Ko--Rado theorem to $t$-designs]{A generalization of Erd\H{o}s--Ko--Rado theorem to $t$-designs in certain semilattices}
\author{Sho Suda}
\address{Graduate School of Information Sciences, Tohoku University, 6-3-09 Aramaki-Aza-Aoba, Aoba-ku, Sendai 980-8579, Japan}
\email{suda@ims.is.tohoku.ac.jp}
\date{\today}
\begin{document}

\begin{abstract}
The Erd\H{o}s--Ko--Rado theorem is extended to designs in semilattices with certain conditions.
As an application, we show the intersection theorems for the Hamming schemes, the Johnson schemes, bilinear forms schemes, Grassmann schemes, signed sets, partial permutations and restricted signed sets.
\end{abstract}
\maketitle

\section{Introduction}
The intersection theorem was shown in 1961 by Erd\H{o}s, Ko and Rado \cite{EKR1961} for a subset in the Johnson scheme.
A set of $m$-subsets in a set of $v$ elements is said to be an $s$-intersection family if the size of intersection is at least $s$ for any two $m$-subsets in the set of $m$-subsets.
The Erd\H{o}s-Ko-Rado theorem states that if $v$ is large enough compared to $m$ and $s$, then the size of an $s$-intersection family is at most $\tbinom{v-s}{m-s}$ and equality holds if and only if the $s$-intersection family consists of all $m$-subsets containing an $s$-subset.   

There are many generalisations of the intersection theorem for $P$- and $Q$-polynomial association schemes \cite{FW1986,H1975,H1987,M1982,T2011} and some combinatorial objects \cite{BL1997,KL2006,Y2010}.
For $P$ and $Q$-polynomial association schemes including Hamming, Johnson, bilinear,  Grassmann and twisted Grassmann schemes, an $s$-intersection family can be regarded as a subset with the width at most $m-s$, where $m$ is the number of classes of the association scheme.
Applying the linear programing, the size of an $s$-intersection family is bounded above, with equality if and only if the subset has the dual width $s$.
Here, width and dual width are parameters defined for a subset of the vertex set in a $P$ and $Q$-polynomial association scheme \cite{BGKM2003}.
See \cite{T2011} for more details on the LP method to the Erd\H{o}s--Ko--Rado theorem.
Furthermore, for the last decade, several Erd\H{o}s--Ko--Rado type theorems have been shown for maps on finite sets with certain conditions.

In 1982 Rands extended the intersection theorem for designs in the Johnson schemes.
If we regard the whole vertex set of the Johnson scheme as a design, then the intersection theorem is obtained from Rands' result as a corollary.
Later in 1999, Fu \cite{F1999} extended Rands' method to regular quantum matroids, defined by Terwilliger \cite{T1996}.
By this extension, Fu obtained the results for intersection theorems in $t$-designs in bilinear forms schemes and Grassmann schemes. 

It was shown by Delsarte \cite{D1976} that the concept of a design can be defined for a subset in the top fiber of a regular semilattice.
In fact, for block designs or orthogonal arrays, there are some regular semilattices in which these can be defined and coincide with the designs defined by Delsarte.

In this paper we extend Rands method to designs in a class of semilattices with certain regularity conditions.
We set the intersection property for a subset in the top fiber of a semilattice and   
the above objects which are considered for the intersection theorem appear in the top fiber of semilattices.

Our main theorem is a two-step generalization of the Erd\H{o}s--Ko--Rado result.
The first is to consider intersection families in a design in a semilattice.
The second is to unify the proof the Erd\H{o}s--Ko--Rado theorem for known examples.
To characterize an $s$-intersection family attaining the upper bound, we will give sufficient conditions on the parameters of the semilattice.

\section{Preliminaries}
Let $X$ be a finite set and $\preceq$ a partial order on $X$.
A partially ordered set $(X,\preceq)$ is said to be a semilattice if for any two points $x,y\in X$ there exists an unique point $z\in X$ such that the greatest lower bound of $x$ and $y$.
Such a $z$ is denoted by $x\wedge y$.
Denote the least element of $X$ by $0$.

We assume that a semilattice $(X,\preceq)$ has a rank function $|\cdot|:X\rightarrow \mathbb{N}\cup\{0\}$ such that $|x|+1$ is the number of terms in a maximal chain from the least element $0$ to the element $x$ including the end elements in the count.
Let $m$ be the maximum value of the rank function.
For any $0\leq i\leq m$, define the fiber $X_i:=\{x\in X: |x|=i\}$.

Throughout this paper, we also assume that a semilattice $(X,\preceq)$ satisfies the following conditions:
\begin{enumerate}[(I)]
\item For $y\in X_m$ and $z\in X_r$, the number of points $u\in X_s$ such that $z\preceq u\preceq y$ is a constant $\mu(r,s)$.
\item For $u\in X_s$, the number of points $z\in X_r$ such that $z\preceq u$ is a constant $\nu(r,s)$.
\item For any $0\leq r \leq m$ and $a\in X_r$, the number of $z\in X_m$ such that $a\preceq z$ is a constant $\theta(r)$.
\item \label{property} If there exists an upper bound of $x$ and $y$, 
then the unique least upper bound is in $X_{i+j-k}$ for elements $x\in X_i$, $y\in X_j$ such that $x\wedge y \in X_k$ and $i+j-k\leq m$, i
\end{enumerate} 
For a subset $Z\subset X$, we denote $Z\wedge Z=\{x\wedge y : x,y\in Z \}$.

We have the following lemma on a regularity condition.
\begin{lemma}\label{lem:a}
For $0\leq r<s \leq m-1$ and $u\in X_r$, the number $z\in X_s$ such that $u\preceq z$ is a constant $\alpha(r,s)$.
\end{lemma}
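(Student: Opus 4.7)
The plan is a double-counting argument using the set
\[
S = \{(z,y)\in X_s\times X_m : u\preceq z\preceq y\}.
\]

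First, I would count $|S|$ by choosing $z$ first. If $N(u)$ denotes the number of $z\in X_s$ with $u\preceq z$, then for each such $z$, condition (III) applied at rank $s$ gives exactly $\theta(s)$ elements $y\in X_m$ with $z\preceq y$. Hence
\[
|S| = N(u)\cdot \theta(s).
\]
Second, I would count $|S|$ by choosing $y$ first. By condition (III) at rank $r$, the number of $y\in X_m$ with $u\preceq y$ equals $\theta(r)$. For each such $y$, condition (I) tells us that the number of $z\in X_s$ satisfying $u\preceq z\preceq y$ equals $\mu(r,s)$. Therefore
\[
|S| = \theta(r)\cdot \mu(r,s).
\]

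Equating the two expressions yields
\[
N(u) = \frac{\theta(r)\,\mu(r,s)}{\theta(s)},
\]
a quantity which does not depend on the choice of $u\in X_r$. Setting $\alpha(r,s)$ equal to this value proves the lemma.

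The only delicate point is dividing by $\theta(s)$, so I would need to observe that $\theta(s)>0$ whenever $s\leq m-1$ (in fact whenever $s\leq m$). This is implicit in the setup: since $m$ is the maximum value of the rank function, every element of $X_s$ sits on some maximal chain reaching an element of $X_m$, so at least one $y\in X_m$ lies above any given $a\in X_s$, forcing $\theta(s)\geq 1$. This is the only step requiring a small supplementary remark; the rest is a direct application of the regularity conditions (I) and (III).
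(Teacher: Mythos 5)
Your proposal is correct and is essentially identical to the paper's proof: both count the pairs $(z,y)\in X_s\times X_m$ with $u\preceq z\preceq y$ in two ways to get $\alpha_u(r,s)\theta(s)=\theta(r)\mu(r,s)$. Your additional remark that $\theta(s)>0$ is a sound (if minor) point of rigor that the paper leaves implicit.
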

\begin{proof}
For any element $u\in X_r$, denote by $\alpha_u(r,s)$ the size of $\{z\in X_s : u\preceq z \}$.
Counting pairs $(z,w)\in X_s\times X_m$ such that $u\preceq z \preceq w$ in two ways yields $\alpha_u(r,s)\theta(s)=\theta(r)\mu(r,s)$, which implies that $\alpha_u(r,s)$ is a constant.
\end{proof}
  
We consider the following conditions on subsets in top fibers of semilattices.
\begin{definition}
Let $s$ be a positive integer with $s < m$, $Y$ a non-empty subset in $X_m$.
The set $Y$ is said to be an $s$-intersection family, if the rank of $x \wedge y$ is at least $s$ for any points $x,y\in Y$.
\end{definition}
\begin{definition}
Let $t$ be a positive integer with $t \leq m$, $Y$ a non-empty subset in $X_m$.
The set $Y$ is said to be a $t$-design with index $\lambda_t$ if for any point $z\in X_t$, the size of 
\[
\{x\in Y :  z\preceq x \}
\]   
is a constant $\lambda_t$.
\end{definition}
For an element $z\in X_k$ and a subset $Y\subseteq X_m$ with $k\leq m$, we denote a set $\{x\in Y :  z\preceq x \}$ by $Y_z$.

For any $0\leq t\leq m$, the assumption (III) implies that the set $X_m$ itself is a $t$-design with index $\theta(t)$.
\begin{proposition}\label{prop:design}
A $t$-design $Y$ in $X_m$ with index $\lambda_t$ is also a $t'$-design with index $\lambda_{t'}=\lambda_t\theta(t')/\theta(t)$ where $t' \leq t$.
\end{proposition}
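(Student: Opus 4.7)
The plan is to prove this by a standard two-way counting argument. Fix an element $z \in X_{t'}$ and count pairs $(u, x)$ with $u \in X_t$, $x \in Y$ and $z \preceq u \preceq x$. Since we already know $X_m$ itself is a $t'$-design by assumption (III), the nontrivial content is matching the index.

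First I would count the pairs by iterating over $x$: for each $x \in Y_z$ (that is, $x \in Y$ with $z \preceq x$, so $x \in X_m$), assumption (I) says the number of $u \in X_t$ with $z \preceq u \preceq x$ equals $\mu(t', t)$. Thus the total is $|Y_z| \cdot \mu(t', t)$. Next I would count by iterating over $u$: by Lemma \ref{lem:a} (together with condition (III) in the boundary case $t = m$), the number of $u \in X_t$ with $z \preceq u$ is the constant $\alpha(t', t)$, and for each such $u$ the $t$-design property of $Y$ gives exactly $\lambda_t$ elements $x \in Y$ with $u \preceq x$. The total is therefore $\alpha(t', t) \cdot \lambda_t$.

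Equating the two counts yields
\[
|Y_z| = \frac{\alpha(t', t) \cdot \lambda_t}{\mu(t', t)}.
\]
From the identity $\alpha(t', t)\theta(t) = \theta(t')\mu(t', t)$ established in the proof of Lemma \ref{lem:a}, the ratio $\alpha(t', t)/\mu(t', t)$ collapses to $\theta(t')/\theta(t)$, giving $|Y_z| = \lambda_t \theta(t')/\theta(t)$ independently of $z$, which is both the $t'$-design property and the claimed index formula. The case $t' = t$ is of course immediate.

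There is no substantial obstacle; the only delicate point is ensuring Lemma \ref{lem:a} (stated for $s \leq m-1$) still applies when $t = m$, which is handled directly by assumption (III) since then $\alpha(t', m) = \theta(t')$ and $\mu(t', m) = 1$, consistent with $\theta(m) = 1$. I would mention this briefly for completeness.
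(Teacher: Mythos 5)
Your proof is correct and is essentially identical to the paper's: both fix an element of $X_{t'}$ and double-count the chains through an intermediate element of $X_t$ and an element of $Y$, arriving at $\lambda_{t'}(x)\mu(t',t)=\alpha(t',t)\lambda_t$ and simplifying via the identity $\alpha(t',t)\theta(t)=\theta(t')\mu(t',t)$ from Lemma~\ref{lem:a}. Your added remark on the boundary case $t=m$ is a reasonable point of care that the paper glosses over, but it does not change the argument.
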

\begin{proof}
For a point $x\in X_{t'}$, define $\lambda_{t'}(x)$ to be the number of $z\in Y$ such that $x\preceq z$.
Counting the pairs $(y,z)\in X_t\times Y$ with $x\preceq y\preceq z$, we have $\lambda_{t'}(x)\mu(t',t)=\alpha(t',t)\lambda_t$ by Lemma~\ref{lem:a}.
Thus $\lambda_{t'}(x)=\lambda_t\theta(t')/\theta(t)$ is a constant. 
\end{proof}

The following are examples of semilattices satisfying the conditions (I)--(IV).
\begin{example}[Johnson scheme]\label{ex:johnson}
Let $v,m$ be integers such that $v\geq 2m\geq 2$.
Let $V=\{1,2,\ldots,v\}$ and $X$ the set of subsets of $V$.
Take $\preceq$ as the usual inclusion.
Then $(X,\preceq)$ forms a semilattice with rank function that counts the number of elements of a finite set.
Parameters are shown to be (see \cite[Theorem~5]{D1976}):
\begin{align*}
\mu(r,s)=\tbinom{m-r}{m-s},\quad \nu(r,s)&=\tbinom{s}{r},\quad \theta(r)=\tbinom{v-r}{m-r}. 
\end{align*} 
A $t$-design in the top fiber clearly coincides with a block design with strength $t$.
See \cite{BJL1999-1}, \cite{BJL1999-2} for more information. 
\end{example}

\begin{example}[Grassmann scheme]\label{ex:grassmann}
Let $v,m$ be integers such that $v\geq 2m\geq 2$ and $q$ a prime power.
Let $V$ be a $v$-dimensional vector space over a finite field $GF(q)$ with order $q$ and $X$ the set of subspaces of $V$.
Take $\preceq$ as the usual inclusion.
Then $(X,\preceq)$ forms a semilattice with rank function  taking the dimension of a vector space.
Parameters are shown to be  (see \cite[Theorem~5]{D1976}):
\begin{align*}
{\textstyle \mu(r,s)= \qtbinom{m-r}{m-s}_q},\quad {\textstyle \nu(r,s)=\qtbinom{s}{r}_q},\quad  {\textstyle \theta(r)}&{\textstyle =\qtbinom{v-r}{m-r}_q}. 
\end{align*} 
For $t$-designs see \cite{BKL2005,S1990,S1992,T1987}.
\end{example}

For two sets $A,B$, we denote the set of maps from $A$ to $B$ by $\text{Map}(A,B)$.
\begin{example}[Hamming scheme]\label{ex:hamming}
Let $m,n$ be integers such that $m\geq 1, n\geq 2$.
Let $V$ and $F$ be finite sets with sizes $m$ and $n$ respectively.
Define 
$X=\{(E,f): E\subseteq V, f\in\text{Map}(E,F)\}$.
For $(E,f),(E',f')\in X$, set $(E,f)\preceq (E',f')$ if and only if $E\subseteq E'$ and $f'|_E=f$.
Then $(X,\preceq)$ forms a semilattice with rank function taking the size of $E$.
Parameters are shown to be (see \cite[Theorem~5]{D1976}):
\begin{align*}
\mu(r,s)=\tbinom{m-r}{m-s},\quad \nu(r,s)=\tbinom{s}{r},\quad \theta(r)=n^{m-r}. 
\end{align*} 
A $t$-design in the top fiber clearly coincides with an orthogonal array with strength $t$.
See \cite{HSS1999} for more information on orthogonal arrays.
\end{example}

For two vector spaces $A,B$ over a same field, we denote the set of linear maps from $A$ to $B$ by $\text{Hom}(A,B)$.

\begin{example}[Bilinear forms scheme]\label{ex:bilinear}
Let $m,n$ be integers such that $m,n\geq 1$ and $q$ a prime power.
Let $V$, $F$ be finite dimensional vector spaces over a finite field $GF(q)$ with dimensions $m$ and $n$ respectively.
Define 
$X=\{(E,f): E \text{ is a subspace of } V, f\in\text{Hom}(E,F)\}$.
For $(E,f),(E',f')\in X$, set $(E,f)\preceq (E',f')$ if and only if $E\subseteq E'$ and $f'|_E=f$.
Then $(X,\preceq)$ forms a semilattice with rank function taking the dimension of $E$.
Parameters are shown to be (see \cite[Theorem~5]{D1976}):
\begin{align*}
{\textstyle \mu(r,s)=\qtbinom{m-r}{m-s}}_q,\quad {\textstyle \nu(r,s)=\qtbinom{s}{r}_q},\quad \theta(r)=n^{m-r}. 
\end{align*} 
For $t$-designs, which are known as Singleton systems, see \cite{D1978}.
\end{example}

For two sets $A,B$, we denote the set of injective maps from $A$ to $B$ by $\text{Inj}(A,B)$.
The following is considered in \cite{KL2006}.
\begin{example}[Injection scheme]\label{ex:injection}
Let $m,n$ be integers such that $n\geq m\geq 1$.
Let $V$ and $W$ be finite sets with sizes $m$ and $n$ respectively.
Define 
$X=\{(E,f): E\subset V, F\subset W, f\in\text{Inj}(E,F)\}$.
For $(E,f),(E',f')\in X$, set $(E,f)\preceq (E',f')$ if and only if $E\subset E',f'|_E=f$.
Then $(X,\preceq)$ forms a semilattice with rank function taking the size of $E$.
Parameters are as follows:
\begin{align*}
\mu(r,s)=\tbinom{m-r}{m-s},\quad \nu(r,s)&=\tbinom{s}{r},\quad \theta(r)=\tfrac{(n-r)!}{(n-m)!}. 
\end{align*} 
A $t$-design in the top fiber coincides with an orthogonal array of Type $I$ with strength $t$.
See \cite[p.~132]{HSS1999} for more information on orthogonal arrays of Type $I$.
\end{example}

The following example is considered in \cite{TAG1985}, \cite{BL1997}. 
\begin{example}[Non-binary Johnson, Signed sets]\label{ex:nonbinary}
Let $(X,\preceq)$ be the same as in Example~\ref{ex:hamming}, $k$ be a positive integer with $k<m$.
Recall that $X_i$ is the set of rank $i$ objects in the semilattice $(X,\preceq)$ for $0\leq i\leq m$.
We consider the semilattice $(\cup_{i=0}^{k}X_i,\preceq)$.
Parameters are as follows:
\begin{align*}
\mu(r,s)=\tbinom{k-r}{s-r}, \quad \nu(r,s)=\tbinom{s}{r},\quad \theta(r)=n^{k-r}\tbinom{m-r}{k-r}. 
\end{align*}
\end{example}

The next example is shown in \cite{Y2010}.
\begin{example}[Restricted signed sets]\label{ex:restksign}
Let $m$ be a positive integer.
Let $V$ be a finite set with size $m$.
Define 
$X=\{(E,f): E\subseteq V, |E|\leq k, f\in\text{Map}(E,V),f(i)\neq i\text{ for all }i\in E\}$.
For $(E,f),(E',f')\in X$, set $(E,f)\preceq (E',f')$ if and only if $E\subseteq E'$ and $f'|_E=f$.
Then $(X,\preceq)$ forms a semilattice with rank function taking the size of $E$.
Parameters are as follows:
\begin{align*}
\mu(r,s)=\tbinom{k-r}{s-r}, \quad \nu(r,s)=\tbinom{s}{r},\quad \theta(r)=(m-1)^{k-r}\tbinom{m-r}{k-r}. 
\end{align*}  
\end{example}
\section{Main theorem}
The following lemma will be used in the proof of Theorem~\ref{thm:main}.
\begin{lemma}\label{lem:dr}
Let $r,s,t$ be positive integers such that $0\leq r\leq s<t\leq m$, and $Y$ a $t$-design in $X_m$.
Define $d_{r}=\max |\{z\in Y: x\preceq z, |z\wedge y|\geq s \}|$ where $x\in X_s$ and $y\in Y$ satisfy $r=|x\wedge y|$.
Then the following statements are valid:
\begin{enumerate}
\item If $2s-t\leq r \leq s-1$, then $d_r\leq \mu(r,s)\lambda_{2s-r}$.
\item If $r \leq 2s-t$, then $d_r\leq \mu(r,s)\lambda_{t}$.
\end{enumerate}  
\end{lemma}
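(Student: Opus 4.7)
The plan is to establish both inequalities by a single double-counting argument. Fix $x\in X_s$ and $y\in Y$ with $|x\wedge y|=r$, and consider
\[
A = \{(u,z)\in X_s\times Y : x\wedge y\preceq u\preceq y,\ x\preceq z,\ u\preceq z\}.
\]

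\emph{Lower bound on $|A|$.} First I would show that every $z\in Y$ with $x\preceq z$ and $|z\wedge y|\geq s$ contributes at least one pair to $A$. Any $u$ with $u\preceq z$ and $u\preceq y$ lies below $z\wedge y$, so such a $u$ must sit in the interval $[x\wedge y,\ z\wedge y]$ at rank $s$. The rank-function axiom (all maximal chains from $0$ to $w$ have the same number of terms) gives a Jordan-Dedekind style consequence: any interval $[a,b]$ contains an element of each rank between $|a|$ and $|b|$. Applying this with $|a|=r\leq s\leq |z\wedge y|$ furnishes such a $u$, so $|A|\geq d_r$.

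\emph{Upper bound on $|A|$.} By condition (I) applied to $x\wedge y\in X_r$ and $y\in X_m$, there are exactly $\mu(r,s)$ admissible $u$. For each such $u$ one checks quickly that $x\wedge u=x\wedge y$ (both inclusions follow from $x\wedge y\preceq u\preceq y$), so condition (IV) forces $|x\vee u|=2s-r$ whenever this join exists; moreover any $z\in Y$ with $x,u\preceq z$ must satisfy $x\vee u\preceq z$. In Case~(1), since $2s-r\leq t$, Proposition~\ref{prop:design} makes $Y$ a $(2s-r)$-design with index $\lambda_{2s-r}$, bounding the number of such $z$ above by $\lambda_{2s-r}$. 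In Case~(2), $2s-r\geq t$, so by the chain condition I may pick a rank-$t$ element $v'\preceq x\vee u$, and the $t$-design property bounds $|\{z\in Y:v'\preceq z\}|$ by $\lambda_t$. Summing over $u$ yields $|A|\leq\mu(r,s)\lambda_{2s-r}$ in Case~(1) and $|A|\leq\mu(r,s)\lambda_t$ in Case~(2); combining with $|A|\geq d_r$ completes both bounds.

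The main technical point is the identity $x\wedge u=x\wedge y$ together with the rank formula $|x\vee u|=2s-r$ coming from (IV), since these pin down the correct design index to invoke in each case; everything else is either routine arithmetic with the parameters or the Jordan-Dedekind consequence of the rank-function axiom.
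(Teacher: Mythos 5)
Your proof is correct and follows essentially the same route as the paper's: both parametrize the relevant $z\in Y$ by the $\mu(r,s)$ elements $u\in X_s$ with $x\wedge y\preceq u\preceq y$ and then bound the number of $z\in Y$ lying above both $x$ and $u$ via the design property at rank $2s-r$ (resp.\ at rank $t$). Your write-up simply recasts the paper's union bound as a double count of pairs $(u,z)$ and makes explicit the steps the paper leaves implicit, namely the identity $x\wedge u=x\wedge y$, the use of condition (IV) to get $|x\vee u|=2s-r$, and the graded-interval consequence of the rank axiom.
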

\begin{proof}
(1): Let $x\in X_s$ and $y\in Y$ such that $|x\wedge y|=r$.
Define $d_r(x,y)=|\{z\in Y: x\preceq z, |z\wedge y|\geq s \}|$.
Setting $\mu_{r,s}(x\wedge y,y):=\{u\in X_s : x\wedge y\preceq u \preceq y\}$,
we have 
\[
\{z\in Y : x\preceq z, |z \wedge y|\geq s\}= \{z\in Y : x\preceq z, u\preceq z \text{ for some } u\in \mu_{r,s}(x,y)\}
\]
and the size of the latter set is at most $\mu(r,s)\lambda_{2s-r}$.
Thus $d_r(x,y)\leq \mu(r,s)\lambda_{2s-r}$ holds.

(2): 
Let $c_j$ be the maximum number of the elements of $Y$ containing an element in $X_j$
Then $c_j$ is at most $\lambda_{j}$ if $j \leq t$ and $\lambda_{t}$ if $j>t$.
Using the same method as in (1), we have the desired result.
\end{proof}

The following theorem shows an intersection theorem for a design in a semilattice.
\begin{theorem}\label{thm:main}
Let $(X,\preceq)$ be a semilattice satisfying the conditions (I)--(IV) and 
$r,s,t$ integers with $0\leq r$ and $0<s<t\leq m$.
Let $Y$ be a $t$-design in the top fiber $X_m$ and 
$Z$ an $s$-intersection family in $Y$.
Assume that
\begin{enumerate}
\item $\mu(r,s)\nu(s,m)\lambda_{t}<\lambda_{s}$ for all $ r \leq 2s-t$, 
\item $\mu(r,s)\nu(s,m)\lambda_{2s-r}<\lambda_{s}$ for all $2s-t \leq r \leq s-1$.
\end{enumerate}
Then $|Z|\leq \lambda_s$ holds. 
Equality holds if and only if $Z$ is equal to a set $Y_z$ for some $z\in X_s$. 
\end{theorem}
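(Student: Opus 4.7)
The plan is to prove the bound $|Z|\leq\lambda_s$ by contradiction via a dichotomy on $|Z_u|$ for $u\in X_s$, and then read off the equality case from the same argument.

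The key observation is the following. For each $u\in X_s$ one of two alternatives must hold: either (i) $Z\subseteq Y_u$, in which case $|Z|\leq|Y_u|=\lambda_s$ by Proposition~\ref{prop:design}, or (ii) there exists $y\in Z$ with $u\not\preceq y$. In case (ii), every $z\in Z$ satisfies $|y\wedge z|\geq s$ by the $s$-intersection property, so Lemma~\ref{lem:dr} applied with $x=u$ and rank $r=|u\wedge y|\leq s-1$ gives $|Z_u|\leq d_r$. Hypotheses (1) and (2) are calibrated precisely so that $d_r<\lambda_s/\nu(s,m)$ for every $0\leq r\leq s-1$, regardless of which case of Lemma~\ref{lem:dr} applies.

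Suppose for contradiction $|Z|>\lambda_s$. Then case (i) is excluded for every $u$, so $|Z_u|<\lambda_s/\nu(s,m)$ holds uniformly in $u\in X_s$. Fixing any $y\in Z$ and double counting pairs $(u,z)$ with $u\in X_s$, $u\preceq y$, $z\in Z$, and $u\preceq z$ yields
\[
\sum_{u\in X_s,\,u\preceq y}|Z_u|\;=\;\sum_{z\in Z}\nu(s,|y\wedge z|)\;\geq\;|Z|,
\]
since each term $\nu(s,|y\wedge z|)$ is at least $\nu(s,s)=1$. On the other hand, the left-hand side is strictly less than $\nu(s,m)\cdot\lambda_s/\nu(s,m)=\lambda_s$, forcing $|Z|<\lambda_s$, a contradiction.

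For the equality case, the same argument shows that if $|Z|=\lambda_s$ and case (ii) holds for every $u\in X_s$, then one again derives $|Z|<\lambda_s$, contradicting equality. Hence case (i) must hold for some $u\in X_s$, and $|Z|=\lambda_s=|Y_u|$ upgrades $Z\subseteq Y_u$ to $Z=Y_u$; the converse is immediate. The main obstacle is identifying the dichotomy in the first place: straightforward double counting alone would give only the much weaker bound $|Z|\leq\nu(s,m)\lambda_s$, and the whole strength of hypotheses (1) and (2) is spent producing the sharp bound $|Z_u|<\lambda_s/\nu(s,m)$ in case (ii).
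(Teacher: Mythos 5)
Your proof is correct, and while it shares the paper's skeleton---the dichotomy between $Z\subseteq Y_u$ and the existence of a witness $y'\in Z$ with $u\not\preceq y'$, followed by Lemma~\ref{lem:dr} to bound $|Z_u|$---the counting step is genuinely different. The paper runs Rands' second-moment argument over $Z_s=(Z\wedge Z)\cap X_s$: it plays $\sum_{x}n_x(n_x-1)=\sum_{y_1\neq y_2}\nu(s,|y_1\wedge y_2|)\geq|Z|(|Z|-1)$ against $\sum_{x}n_x\leq|Z|\nu(s,m)$ and $n_x\leq d$ to obtain $|Z|\leq(d-1)\nu(s,m)+1$. You instead fix a single anchor $y\in Z$ and make a first-moment count over the $\nu(s,m)$ rank-$s$ elements below it, via $\sum_{u\preceq y}|Z_u|=\sum_{z\in Z}\nu(s,|y\wedge z|)\geq|Z|$; this avoids pair counting entirely and yields the marginally weaker bound $|Z|\leq d\,\nu(s,m)$, which hypotheses (1) and (2) are still exactly strong enough to beat, since they give $d_r\,\nu(s,m)<\lambda_s$ for every $0\leq r\leq s-1$. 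A side benefit of your formulation: your dichotomy cleanly separates $Z\subseteq Y_u$ from the existence of some $y'\in Z$ with $u\not\preceq y'$, whereas the paper's phrasing (``if $Z$ is the set of all elements containing some $x$ \dots\ if not, there exists $y\in Z$ with $x\not\preceq y$'') silently passes over the case $Z\subsetneq Y_x$---harmless, since then $|Z|<\lambda_s$ anyway, but your version closes that small gap. The treatment of the equality case is essentially the same in both proofs.
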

\begin{proof}
Set $Z_s=(Z\wedge Z)\cap X_s$.
For an element $x\in Z_s$, denote by $n_x$ the number of elements in $Z$ that contain $x$. 
Counting the set $\{(x,y)\in Z_s\times Z : x\preceq y \}$ in two ways yields 
\[
\sum_{x\in Z_s}n_x\leq |Z|\nu(s,m).
\]
Next, counting the set $\{(x,y_1,y_2)\in Z_s\times Z\times Z: x\preceq y_1,x\preceq y_2,y_1\neq y_2 \}$ in two ways yields 
\[
\sum_{x\in Z_s}n_x(n_x-1)=\sum_{y_1,y_2\in Z,y_1\neq y_2}\nu(s,|y_1 \wedge y_2|)\geq |Z|(|Z|-1).
\]
If $Z$ is the set of all elements containing an element $x\in X_s$, then clearly $|Z|=\lambda_s$.
If not, namely there exists an element $y\in Z$ such that $x\not\preceq y$ for any element $x\in Y_s$.
For $d_{r}$ defined in Lemma~\ref{lem:dr}, set $d=\max_{r}d_r$.
Then  $n_x\leq d$ holds.
Thus $$|Z|(|Z|-1)\leq \sum_{x\in Z_s}n_x(n_x-1)\leq (d-1)|Z|\nu(s,m),$$
equivalently $|Z|\leq (d-1)\nu(s,m)+1$.
In particular, $|Z|<d\nu(s,m)$ holds.
By the assumptions $(1),(2)$ and Lemma~\ref{lem:dr}, $|Z|< \lambda_s$ holds.
\end{proof}
\begin{remark}
By Proposition~\ref{prop:design} the assumption of Theorem~\ref{thm:main} is equivalent to
\begin{enumerate}
\item $\nu(r,s)\mu(s,m)\theta(t)<\theta(s)$ for all $ r \leq 2s-t$, 
\item $\nu(r,s)\mu(s,m)\theta(2s-r)<\theta(s)$ for all $2s-t \leq r \leq s-1$.
\end{enumerate}
This implies that the assumption does not depend on the indices $\lambda_i$ ($0\leq i\leq t$) of the design appearing in the theorem. 
\end{remark}

If we take an $m$-design $Y$ as the whole set of the top fiber $X_m$, then $\lambda_s=\theta(s)$.
As a corollary of Theorem~\ref{thm:main} we obtain the intersection theorem. 
We list the conditions of parameters to satisfy the assumption of Theorem~\ref{thm:main}:

\begin{table}[h]
\begin{center}
\caption{Required conditions on Theorem~\ref{thm:main}}
\begin{tabular}{c|c}\label{tab:para}
semilattice & condition on parameters \\
\hline
\hline
Johnson scheme  & $v>s+\tbinom{m}{s}(m-s+1)(m-s)$ if $s<t-1$, \\
& $v>s+(m-s)\tbinom{m}{s}^2$ if $s=t-1$\\
\hline
Grassmann scheme & $q^{v-s}-1>\tfrac{(q^{m-s}-1)(q^{m-s-1}-1)}{q-1}\qtbinom{m}{s}_q^2$ if $s<t-1$  ,\\
& $\tfrac{q^{v-s}-1}{q^{m-s}-1}>\qtbinom{m}{s}_q^2$ if $s=t-1$ \\
\hline
Hamming scheme & $n>(m-s+1)\tbinom{m}{s}$ if $s<t-1$   \\
& $n>\tbinom{m}{s}^{2}$ if $s=t-1$ \\
\hline
Bilinear forms scheme & $n>\tfrac{q^{m-s+1}-1}{q-1}\qtbinom{m}{s}_q$ if $s<t-1$   \\
& $q>\qtbinom{m}{s}_q^{2}$ if $s=t-1$ \\
\hline
Injections & $n>s+(m-s+1)\tbinom{m}{s}$ if $s<t-1$   \\
& $n>s+\tbinom{m}{s}^{2}$ if $s=t-1$ \\
\hline
Non-binary Johnson & $n>\tfrac{(k-s+1)(k-s)}{m-s}\tbinom{k}{s}$ if $s<t-1$   \\
& $n>\tfrac{k-s}{m-s}\tbinom{k}{s}^2$ if $s=t-1$ \\
\hline
Restricted signed sets & $(m-1)(m-s)>(k-s+1)(k-s)\tbinom{k}{s}$ if $s<t-1$   \\
& $(m-1)(m-s)>(k-s)\tbinom{k}{s}^2$ if $s=t-1$ \\
\end{tabular}
\end{center}
\end{table}

\begin{remark}
(1) In Example~\ref{ex:injection} if we restrict $n=m$, then the maps are permutations.
As described in Table~\ref{tab:para}, our theorem cannot be applied to this case.
See \cite{CK2003,EFP2011,GM2009,KR2008} for the intersection theorem for permutations.  

(2) The required assumption in Theorem~\ref{thm:main} to obtain the intersecting theorem seems far from the sharp bound.
In fact if we take the top fiber as a design in several cases, the sharp bounds are summarized in \cite[P.3]{T2011}.
\end{remark}
\section*{Acknowledgements}
The author would like to thank the anonymous referees for pointing out errors and many useful suggestions.

\end{document}